\title[Polynomial convexity of $\overline\partial$-flat perturbations of totally real sets]{Polynomial convexity of $\overline\partial$-flat perturbations of totally real sets}
\author{Leandro Arosio$^*$ \& H{\aa}kan Samuelsson Kalm \& Erlend F.\ Wold}
\subjclass[2010]{}
\keywords{}
\address{Leandro Arosio, Dipartimento Di Matematica, Universit\`{a} di Roma ``Tor Vergata'',
Via Della Ricerca Scientifica 1, 00133 Roma, Italy}
\email{arosio@mat.uniroma2.it}
\address{H{\aa}kan Samuelsson Kalm, Department of Mathematical Sciences, Division of Algebra and Geometry, University of Gothenburg and 
Chalmers University of Technology, SE-412 96 G\"{o}teborg, Sweden}
\email{hasam@chalmers.se}
\address{Erlend F. Wold, Department of Mathematics, University of Oslo, PO-BOX 1053,
Blindern, 0316 Oslo, Norway}
\email{erlendfw@math.uio.no}
\date{\today}
\newtheorem{proposition}{Proposition}[section]
\newtheorem{theorem}[proposition]{Theorem}
\newtheorem{lemma}[proposition]{Lemma}
\newtheorem{corollary}[proposition]{Corollary}
\theoremstyle{definition}
\newtheorem{remark}[proposition]{Remark}
\numberwithin{equation}{section}
\DeclareMathOperator{\Hom}{\mathscr{H}\text{\kern -3pt {\calligra\Large om}}\,}
\DeclareMathOperator{\Ext}{\mathscr{E}\text{\kern -3pt {\calligra\Large xt}}\,\,}
\DeclareMathOperator{\Image}{\mathscr{I}\text{\kern -3pt {\calligra\Large m}}\,}
\DeclareMathOperator{\Ker}{\mathscr{K}\text{\kern -3pt {\calligra\Large er}}\,}
\newcommand{\PM}{\mathscr{P} \kern -3pt \mathscr{M}}
\newcommand{\CH}{\mathscr{C} \kern -2pt \mathscr{H}}
\def\newop#1{\expandafter\def\csname #1\endcsname{\mathop{\rm #1}\nolimits}}
\begin{document}
\nocite{*}
\bibliographystyle{plain}

\begin{abstract}
We show that if $X$ is a totally real $d$-dimensional manifold 
attached to a polynomially convex
compact set $K$ in $\mathbb{C}^n$, $d<n$, then there are arbitrarily small
perturbations $X'$ of $X$ such that $K\cup X'$ is
polynomially convex. The perturbations are induced by diffeomorphisms of 
$\mathbb{C}^n$ fixing $K$, which are $\bar\partial$-flat on $K\cup X$,  and which are arbitrarily $C^k$-close to the identity.

\end{abstract}
	\thanks{\textit{2020 Mathematics Subject Classification:}  32E20}
		\thanks{\textit{Key words and phrases:}  Polynomial convexity, totally real manifolds.}
	
\thanks{${}^*$  Partially supported by   INdAM, by  PRIN {\sl  Real and Complex Manifolds: Geometry and Holomorphic Dynamics} n. 2022AP8HZ9, and by the MUR Excellence Department Project MatMod@TOV
		CUP:E83C23000330006}

\maketitle
\thispagestyle{empty}

\section{Introduction}

The polynomially convex hull $\widehat K$ of a compact set $K\subset \mathbb{C}^n$
is the set of points $z\in\mathbb{C}^n$ such that $|p(z)|\leq\sup_K|p|$ for all
polynomials $p\in\mathbb{C}[z_1,\ldots,z_n]$. It is the maximal 
ideal space of the uniform algebra $[z_1,\ldots,z_n]_K$ of continuous functions on 
$K$ that are limits in $C^0(K)$ of polynomials. If $\widehat K=K$, then $K$ is said to be
polynomially convex. The celebrated Oka--Weil theorem says that any function
holomorphic in a neighborhood of a polynomially convex compact set $K$
can be uniformly approximated on $K$ by polynomials.

Another important notion in complex analysis is that of a totally real set. 
A submanifold $X\subset\mathbb{C}^n$ is totally real if for all $z\in X$,
the tangent space $T_zX$
contains no non-trivial complex subspace. This notion is connected to,
and has a subtle interplay with polynomial
convexity. 
The objective of this note is to prove the following generalization of
\cite[Theorem~1.4]{AW}.

\begin{theorem}\label{glass}
Let $K\subset \mathbb C^n$ be a polynomially convex compact set, and let $X\subset\mathbb C^n\setminus K$
be a closed, bounded,  totally real $C^\infty$-smooth 
submanifold with $\mathrm{dim}(X)< n$.
Then for any $k\in\mathbb{N}$, 
$\delta>0$, and $\kappa\in\mathbb{N}$
there exists a $C^\infty$-smooth diffeomorphism 
$F\colon\mathbb{C}^n\to\mathbb{C}^n$ 
such that the following hold: 
\begin{itemize}
\item[(i)] $\|F-\mathrm{id}\|_{C^k(\mathbb{C}^n)}<\delta$, 
\item[(ii)] $F=\mathrm{id}$ on $K$
\item[(iii)] 
$\displaystyle{\frac{\partial^{\alpha+\beta}F}{\partial z^\alpha\bar{z}^\beta}(z)}=0$ 
for all $z\in K\cup X$ if $|\alpha|+|\beta|\leq\kappa$ and $|\beta|\geq 1$,
\item[(iv)] $F(K\cup X)$ is polynomially convex.
\end{itemize}
\end{theorem}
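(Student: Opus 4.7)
My plan is to reduce the global statement to finitely many local $\bar\partial$-flat perturbations, patched by $\bar\partial$-flat cut-off functions, and then to deduce polynomial convexity of $F(K\cup X)$ from a global plurisubharmonic defining function. Since $X$ is compact and disjoint from $K$, the distance $\mathrm{dist}(X,K)$ is positive, so I first cover $X$ by finitely many open balls $B_1,\ldots,B_N$ contained in $\mathbb{C}^n\setminus K$, each small enough that total reality of $X$ furnishes a local straightening, defined on a neighborhood of $\overline{B_j}$ and $\bar\partial$-flat to order $\kappa$ on $X$, which sends $X\cap B_j$ onto a relatively open piece of $\mathbb{R}^d\times\{0\}\subset\mathbb{C}^d\times\mathbb{C}^{n-d}$. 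The hypothesis $d<n$ leaves a genuine $(n-d)$-dimensional complex normal direction into which one may apply an arbitrarily small real translation; pulled back by the local straightening, this yields a smooth map $G_j$ on a neighborhood of $\overline{B_j}$ which is $C^k$-close to the identity, $\bar\partial$-flat on $X\cap\overline{B_j}$ to order $\kappa$, and whose image of $X\cap B_j$ has a strictly plurisubharmonic defining function in a full neighborhood.

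Next I globalize by induction on $j$. Writing $F^{(j)}=F_j\circ\cdots\circ F_1$, at each stage I take a smooth cut-off $\chi_j$ which equals $1$ on a neighborhood of $B_j$, is supported in a slightly larger set still disjoint from $K$, and is $\bar\partial$-flat on $K\cup F^{(j-1)}(X)$ to order $\kappa$. Such cut-offs exist by H\"ormander--Wermer type constructions: total reality of $F^{(j-1)}(X)$ provides $\bar\partial$-flat scalar functions with prescribed jets on $F^{(j-1)}(X)$, and $\overline{B_j}\cap K=\emptyset$ makes the construction near $K$ trivial. I then set
\[
F_j(z)=\chi_j(z)\,G_j(z)+(1-\chi_j(z))\,z,
\]
which fixes $K$, is $\bar\partial$-flat on $K\cup F^{(j-1)}(X)$ to order $\kappa$, and is $C^k$-close to the identity. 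Using the chain rule one checks that composing such maps preserves $\bar\partial$-flatness to order $\kappa$ on the original $K\cup X$, so $F=F_N\circ\cdots\circ F_1$ satisfies (i)--(iii).

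Property (iv) would then follow by exhibiting a continuous non-negative plurisubharmonic function $\phi$ on $\mathbb{C}^n$ which vanishes exactly on $F(K\cup X)$ and is strictly plurisubharmonic in its complement; polynomial convexity of $F(K\cup X)$ then follows from standard Rossi/Stolzenberg arguments. I would obtain $\phi$ by patching, via a smooth partition of unity, the local strictly plurisubharmonic defining functions for $F(X)$ constructed in Step~1 with a non-negative continuous plurisubharmonic function on $\mathbb{C}^n$ vanishing on $K$ and strictly plurisubharmonic off $K$, which exists because $K$ is polynomially convex. The strict positivity of $\mathrm{dist}(F(X),K)$ makes the patching clean.

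\textbf{Main obstacle.} The point I expect to be most delicate is the $\bar\partial$-flat compatibility throughout the induction: the cut-offs $\chi_j$ must be $\bar\partial$-flat to order $\kappa$ on $K\cup F^{(j-1)}(X)$, the local straightenings and perturbations $G_j$ must be simultaneously $C^k$-small and compatible with the $\bar\partial$-flat framework, and previously achieved local polynomial convexity on $B_1,\ldots,B_{j-1}$ must not be destroyed when $F_j$ is applied. The construction and control of the $\bar\partial$-flat cut-offs, and more generally the $\bar\partial$-flat local straightening of a totally real manifold, appear to rely on the Whitney-type flat extension machinery developed in \cite{AW}.
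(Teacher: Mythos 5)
Your proposal breaks down at the point where polynomial convexity (property (iv)) is supposed to follow, and the gap is fundamental rather than a detail to be filled in.

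First, the claim that a small normal translation of $X$ buys you ``a strictly plurisubharmonic defining function in a full neighborhood'' does not distinguish the perturbed manifold from the unperturbed one: \emph{every} smooth totally real submanifold already has a local strictly plurisubharmonic nonnegative defining function (essentially $\mathrm{dist}(\cdot, X)^2$), so this is a property you start with, not one gained by the perturbation. More importantly, such a local psh defining function does not control the polynomial hull of $K\cup X$. The hull is a global object; analytic discs with boundary on $X$ can escape any neighborhood of $X$, and the presence of a local Stein neighborhood basis for $X$ says nothing about whether such discs exist. Indeed even without $K$ there are totally real tori, circles, etc.\ whose hull is strictly larger, and the union of two polynomially convex compacts ($K$ and, say, a polynomially convex $X$) need not be polynomially convex. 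This is exactly the difficulty that Proposition~\ref{LW3} (L\o w--Wold) and its refinements are designed to overcome: by perturbing a parametrized cube $f(\overline I)$ one can force the hull $\widehat{K\cup f_\epsilon(\overline I)}$ into a prescribed small neighborhood of $K$. That is a delicate, genuinely nonlocal statement proved with polynomial approximation, not with local defining functions.

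Second, the proposed patching of local psh defining functions via a smooth partition of unity does not produce a plurisubharmonic function: multiplying a psh function by a cutoff (or taking non-convex combinations) destroys plurisubharmonicity except in very special circumstances, so the global $\phi$ you want does not come out of this construction. And even if one had a continuous nonnegative psh $\phi$ on $\mathbb{C}^n$ vanishing exactly on $F(K\cup X)$, one would still need to construct it -- nothing in your Step~1 produces it.

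By contrast, the paper's proof keeps careful track of the hull at every stage: it covers $X$ by parametrized cubes $g_j(\overline I_0)$, inductively perturbs one cube at a time using Proposition~\ref{LW3} to keep $\widehat{K'\cup X_i}\subset U_i\cup F_{\epsilon,i}(X_i)$, extends the cube perturbation to a $\bar\partial$-flat global diffeomorphism via Lemma~\ref{easywhitney}, and controls stability under composition via Proposition~\ref{LW1} and Corollary~\ref{LW2}. Finally, exact polynomial convexity (iv) requires a second induction with a shrinking system of neighborhoods $U_j\downarrow K$ and convergence of the infinite composition $F^\infty_1$; the hull is forced into $U_j\cup F^\infty_1(X)$ for every $j$, hence into $K\cup F^\infty_1(X)$. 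The $\bar\partial$-flat extension machinery you flag as the main obstacle is real, and Lemma~\ref{easywhitney} does the work you anticipate, but the actual crux -- the hull-shrinking mechanism from \cite{LW} and its iteration -- is absent from your outline.
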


The main novelty compared to \cite[Theorem~1.4]{AW} it that the perturbation of
 $X$ is induced by a
diffeomorphism $F$ of $\mathbb{C}^n$ satisfying (iii). 
In particular, $DF$ is $\mathbb{C}$-linear at all $z\in K\cup X$ since $\bar\partial F=0$
there. Notice that $F(X)$ thus is totally real.

We think Theorem~\ref{glass} has independent interest in 
view of the study of 
the structure of polynomially convex hulls and finding embeddings of 
manifolds into $\mathbb{C}^n$ such that the hull of the image has certain properties,
see, e.g., \cite{alexander, DL, F, FR, Gupta, GS1, GS2, ISW, IS, S, Stout, W}.

The application we primarily have in mind is the following. 
Let $M\subset\mathbb{C}^n$ be a compact submanifold of dimension
$d<n$ and let $K$ be the set of 
$z\in M$ such that $T_zM$ contains a non-trivial complex subspace.
Assume that $K$ is polynomially convex. Applying Theorem~\ref{glass} with
$X=M\setminus K$ it follows that there is an arbitrarily small perturbation $M'$ of
$M$ fixing $K$ such that $M'$ is polynomially convex and $M'\setminus K$ is totally real.
This is used in \cite{ASKW} to show that if $d=n-1\leq 11$, then any such
$M$ can be slightly perturbed to a submanifold $M'$ that is
polynomially convex and $[z_1,\ldots,z_n]_{M'}=C^0(M')$.

\section{Preliminaries}

We recall some results and notions 
from \cite{LW} and \cite{AW}. A closed set $X$ in $\mathbb{C}^n$ is a 
\emph{totally real set} of class $C^k$, $k\in\mathbb{N}\cup \{\infty\}$,
if for each $z\in X$ there is a neighborhood $U_z$ of $z$ in 
$\mathbb{C}^n$ and a $C^k$-smooth closed totally real submanifold $Y$ of $U_z$
such that $X\cap U_z\subset Y$. Possibly shrinking $U_z$ we may assume that
$Y$ is a $C^k$-embedding of the unit ball in some $\mathbb{R}^\ell$. 
If we fix a locally finite collection $\mathcal{Y}=\{Y_j\}$ of such $Y$ so that
$X\subset\cup_jY_j$ we say that the pair $(X,\mathcal{Y})$ is a
\emph{parametrized totally real set} of class $C^k$. 
A $C^k$-perturbation of  $(X,\mathcal{Y})$
is a set $\varphi(X)$, where $\varphi\colon\cup_jY_j\to\mathbb{C}^n$ is 
$C^k$-smooth on each $Y_j$.
For any compact set $K\subset\mathbb{C}^n$ we let
$$
h(K)=\overline{\widehat{K}\setminus K}.
$$
The following is \cite[Proposition~4]{LW}. 

\begin{proposition}\label{LW1}
Let $(X,\mathcal Y)$ be a compact parametrized totally real set in $\mathbb C^n$ of
class $C^1$, and 
let $U'\Subset U\Subset\mathbb C^n$ be open sets. Then there exists a neighborhood 
$\Omega$ of $X$ such that the following hold:
\begin{itemize}
\item[(1)] If $S\subset X$ is closed and $K\subset U'$ is compact, then 
\begin{equation}\label{cake}
\widehat{K\cup S}\subset U'\cup\Omega\Rightarrow 
h(K\cup S)\subset U.
\end{equation}
\item[(2)] If $X'$ is a sufficiently small $C^1$-perturbation of $X$, then \eqref{cake} 
holds with $S'$ in place 
of $S$ for all closed subsets $S'$ of $X'$.
\end{itemize}
\end{proposition}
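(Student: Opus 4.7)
The plan is to build $\Omega$ from local plurisubharmonic barriers associated to the charts $Y_j$ in the parametrization of $X$, and to trap the free hull $\widehat{K\cup S}\setminus(K\cup S)$ using the Rossi local maximum modulus principle in combination with those barriers. For each $Y_j$, total reality yields a non-negative, strictly plurisubharmonic $C^2$ function $\rho_j$ on an open neighborhood $W_j$ of $Y_j$ with $\rho_j^{-1}(0)=Y_j$ and $\rho_j(z)\asymp\mathrm{dist}(z,Y_j)^2$; this is the standard construction via a local holomorphic extension of $Y_j$. Compactness of $X$ allows us to work with only finitely many such charts. I would then take $\Omega$ to be a thin tubular neighborhood of $X$ contained in $\bigcup_j W_j$, shrunk so that $\overline\Omega\setminus U'$ lies in a prescribed neighborhood of $X$ already contained in $U$; this last condition is possible because $U'\Subset U$ and $X$ is compact.

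To prove (1), assume $\widehat{K\cup S}\subset U'\cup\Omega$ and take $z\in\widehat{K\cup S}\setminus(K\cup S)$ with $z\notin U'$. Then $z\in\Omega\setminus U'$, so $z\in W_j$ for some $j$. The Rossi local maximum modulus principle, applied to the uniform algebra $P(K\cup S)$ on its maximal ideal space $\widehat{K\cup S}$---whose Shilov boundary is contained in $K\cup S$---combined with the psh barrier $\rho_j$, gives $\rho_j(z)\le\sup_{\partial V\cap\widehat{K\cup S}}\rho_j$ for any small ball $V$ around $z$ whose closure is disjoint from $K\cup S$. Iterating this estimate on chains of balls that remain inside the single chart $W_j$ forces $\rho_j(z)$ to be as small as one wishes, and hence $z$ to be arbitrarily close to $Y_j$. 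The choice of $\Omega$ then places $z$ inside $U$, as desired. For (2), the whole construction is stable under small $C^1$-perturbations: for $\varphi$ close to the identity in $C^1$ the functions $\rho_j\circ\varphi^{-1}$ serve as barriers for $\varphi(Y_j)$ with essentially the same constants, so the same $\Omega$ (mildly enlarged) works uniformly for any closed $S'\subset X'=\varphi(X)$.

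The main obstacle I anticipate is the trapping step: one must organize the iteration of the local maximum principle together with the barrier estimate so that a single $\Omega$, independent of $S$ and (for (2)) of the perturbation, suffices. The quantitative control $\rho_j\asymp\mathrm{dist}(\cdot,Y_j)^2$ combined with the uniform Shilov-boundary statement from Rossi is precisely what makes this uniformity possible, and tracking the dependence of the iteration on the chart size and on the $C^1$-distance of $\varphi$ to the identity is the only delicate point.
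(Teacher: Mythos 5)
The paper does not prove Proposition~\ref{LW1}; it quotes it directly from L{\o}w--Wold \cite{LW} (their Proposition~4), so your attempt has to stand on its own. Your overall toolkit (nonnegative strictly plurisubharmonic barriers vanishing on the totally real charts, plus Rossi's local maximum modulus principle) is the right one and is indeed what underlies the L{\o}w--Wold argument, but the way you deploy it has a gap that the statement is specifically designed to navigate around.

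The central problem is your construction of $\Omega$: you ask that ``$\overline\Omega\setminus U'$ lie in a prescribed neighborhood of $X$ already contained in $U$,'' claiming this is possible ``because $U'\Subset U$ and $X$ is compact.'' But nothing in the hypotheses places $X$ inside $U$ (and in the application in the paper, $X$ is emphatically not contained in $U$: $U$ is a small neighborhood of the polynomially convex compact $K'$, while $X$ is a perturbed image of the whole manifold). So there is no neighborhood of $X$ contained in $U$, and consequently no tubular $\Omega$ of the kind you describe. This is not a cosmetic issue; it is exactly why the proposition is nontrivial. With your $\Omega$, the implication \eqref{cake} would be immediate (one would have $\widehat{K\cup S}\subset U'\cup\Omega\subset U$ with no analysis at all), which signals the choice is too strong to be realizable. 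For the same reason your final step --- ``$\rho_j(z)$ small, hence $z$ close to $Y_j$, hence $z\in U$'' --- does not follow: proximity to $X$ does not imply membership in $U$.

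The trapping step is also not right as stated. Rossi plus strict plurisubharmonicity gives $\rho_j(z)\le\sup_{\widehat{K\cup S}\cap\partial V}\rho_j$ (in fact a strict inequality for a strictly psh test function); iterating this along a chain of balls produces points where $\rho_j$ is \emph{at least} as large, not smaller, so it cannot ``force $\rho_j(z)$ to be as small as one wishes.'' The way these barriers are actually used is to show that $\rho_j$ restricted to $\widehat{K\cup S}$ cannot attain an interior local maximum at a point outside $K\cup S$, and then to deduce something about where the maximum of $\rho_j$ over the relevant compact piece of the hull is attained (namely on $S\subset X$, where $\rho_j=0$, or near $K$); this is a genuinely different logical flow than your chaining. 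Finally, for part~(2) the composition $\rho_j\circ\varphi^{-1}$ is not plurisubharmonic for a mere $C^1$-diffeomorphism $\varphi$ (one needs $\varphi$ holomorphic for that), and it is not even $C^2$; the stability under $C^1$-perturbation has to be obtained by re-running the barrier construction on the perturbed charts and tracking that the relevant constants vary continuously, not by pulling back the original barriers.
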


The following is a corollary (Corollary 2 in \cite{LW}).

\begin{corollary}\label{LW2}
Let $(X,\mathcal Y)$ be a compact parametrized totally real set in $\mathbb C^n$
of class $C^1$, let 
$K\subset\mathbb C^n$ be compact, and let $U\supset K$ be an open set such that 
\begin{equation}\label{cakes}
h(K\cup X)\subset U.
\end{equation}
If $X'$ is  a sufficiently small $C^1$-perturbation of $X$, then \eqref{cakes} holds with
$X$ replaced by $X'$.
\end{corollary}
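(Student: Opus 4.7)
The plan is to reduce the corollary to Proposition~\ref{LW1} combined with the standard upper semi-continuity of the polynomial hull under Hausdorff convergence.

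First, I would choose open bounded sets $V' \Subset V \Subset U$ both containing $K \cup h(K\cup X)$; this is possible because $h(K\cup X)$ is a closed subset of the compact set $\widehat{K\cup X}$, hence itself compact, and both $K$ and $h(K\cup X)$ lie inside the open set $U$ by hypothesis. Then I would apply Proposition~\ref{LW1} with the pair $(V', V)$ playing the role of $(U', U)$, which produces a neighborhood $\Omega$ of $X$ with the following property: for every sufficiently small $C^1$-perturbation $X'$ of $X$ and every closed $S' \subset X'$, the implication $\widehat{K\cup S'}\subset V'\cup\Omega\Rightarrow h(K\cup S')\subset V$ holds.

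Specializing to $S' = X'$, it remains to verify that $\widehat{K\cup X'}\subset V'\cup\Omega$ for all $X'$ sufficiently close to $X$. For the unperturbed set,
$\widehat{K\cup X} = (K\cup X)\cup(\widehat{K\cup X}\setminus(K\cup X)) \subset (K\cup h(K\cup X))\cup X \subset V'\cup\Omega$,
so $V'\cup\Omega$ is already an open neighborhood of the compact set $\widehat{K\cup X}$. Since a $C^1$-perturbation of the parametrized totally real set $(X,\mathcal Y)$ is in particular a $C^0$-perturbation, $K\cup X'$ converges to $K\cup X$ in Hausdorff distance as $X'\to X$.

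The key input will then be the standard upper semi-continuity of the polynomial hull: if compact sets $E_n\to E$ in Hausdorff distance and $W\supset\widehat{E}$ is open, then $\widehat{E_n}\subset W$ for all large $n$. This follows by a routine compactness argument using the uniform boundedness of $\{\widehat{E_n}\}$ together with the limit passage $|p(z)|\le\sup_E|p|$ for an arbitrary polynomial $p$ applied to any cluster point of a hypothetical counterexample sequence. Applied with $E_n = K\cup X'_n$ and $W = V'\cup\Omega$, this gives $\widehat{K\cup X'}\subset V'\cup\Omega$ once $X'$ is close enough, after which Proposition~\ref{LW1} delivers $h(K\cup X')\subset V\subset U$, as required. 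There is no substantive obstacle beyond this well-known semi-continuity fact; the real work has been carried out in Proposition~\ref{LW1}.
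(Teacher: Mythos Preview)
Your argument is correct: you reduce to Proposition~\ref{LW1} and conclude via the standard upper semi-continuity of the polynomial hull under Hausdorff convergence, which is exactly how the result is meant to follow. The paper does not reproduce a proof but simply records the statement as a corollary of Proposition~\ref{LW1} (citing \cite{LW}); your derivation is the intended one.
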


Suppose $F\colon \mathbb{C}^n\to\mathbb{C}^n$ is a diffeomorphism. 
Corollary~\ref{LW2} implies that there is a constant $c(X,K,U)>0$ such that if
\begin{equation}\label{report}
\|F-\text{id}\|_{C^1(\mathbb{C}^n)}< c(X,K,U),
\end{equation}
then \eqref{cakes} holds with $X$ replaced by $F(X)$.

The following was Proposition 6 in \cite{LW} in the case of $C^1$-regularity, and a special case of Lemma 3.1 in \cite{AW}.

\begin{proposition}\label{LW3}
Let $f$ be a $C^\infty$-smooth embedding of a neighborhood of the closed unit cube
$\overline{I}\subset\mathbb{R}^d$ into $\mathbb{C}^n$, $d<n$, such that
$f(\overline{I})$ is totally real. 
Let $K\subset\mathbb C^n$ be polynomially convex, and let $U$ be a neighborhood of $K$. Then 
for any $\epsilon>0$, $k\in\mathbb N$, there exists a $C^\infty$-smooth embedding $f_\epsilon\colon \overline{I}\rightarrow \mathbb C^n$
such that 
\begin{itemize}
\item[(1)] $\|f_\epsilon-f\|_{C^k(\overline{I})}<\epsilon$,
\item[(2)] $f_\epsilon=f$ near $f^{-1}(K)$, 
\item[(3)] $h(K\cup f_\epsilon(\overline{I}))\subset U$.
\end{itemize}
\end{proposition}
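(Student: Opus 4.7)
My plan is to derive the proposition from its $C^1$-regularity version \cite[Proposition~6]{LW} via a smoothing argument combined with the stability estimate of Corollary~\ref{LW2}. As the excerpt notes, the statement is also a special case of \cite[Lemma~3.1]{AW}, which carries out essentially this reduction in greater generality.

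First I would apply \cite[Proposition~6]{LW}: for any parameter $\eta>0$ to be fixed later, it produces a $C^1$-smooth embedding $\tilde f\colon\overline{I}\to\mathbb{C}^n$ with $\|\tilde f-f\|_{C^1(\overline{I})}<\eta$, with $\tilde f=f$ on a neighborhood $V$ of $f^{-1}(K)$, and with $h(K\cup\tilde f(\overline{I}))\subset U$. Extend $\tilde f$ to a fixed neighborhood of $\overline{I}$ in $\mathbb{R}^d$ by a $C^1$-extension theorem, and set $g:=\tilde f-f$, a compactly supported $C^1$-small function vanishing on $V$.

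Next I would mollify $g$. For a standard $C^\infty$-smooth mollifier $\rho_\sigma$ of width $\sigma>0$, set $g_\sigma:=g*\rho_\sigma$. For $\sigma$ small $g_\sigma$ still vanishes on a smaller neighborhood of $f^{-1}(K)$; integration by parts in the convolution yields the bound $\|g_\sigma\|_{C^k(\overline{I})}\le C_k\,\|g\|_{C^1}\,\sigma^{-(k-1)}$, while $\|g_\sigma-g\|_{C^1(\overline{I})}\to 0$ as $\sigma\to 0^+$ because $g\in C^1$. I would choose $\sigma$ first, small enough that $\mathrm{supp}(g_\sigma)$ still avoids $f^{-1}(K)$ and that $\|g_\sigma-g\|_{C^1}$ lies below the stability constant $c(\tilde f(\overline{I}),K,U)$ appearing in \eqref{report}, and only then choose $\eta$ small enough that $C_k\,\eta\,\sigma^{-(k-1)}<\epsilon$. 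The resulting map $f_\epsilon:=f+g_\sigma$ is then $C^\infty$-smooth (since both $f$ and $g_\sigma$ are), agrees with $f$ near $f^{-1}(K)$, and satisfies $\|f_\epsilon-f\|_{C^k(\overline{I})}<\epsilon$; moreover, for $\eta,\sigma$ small it is an embedding by the inverse function theorem applied to $f$.

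Finally, $f_\epsilon$ is $C^1$-close to $\tilde f$, so Corollary~\ref{LW2} (equivalently, the stability bound \eqref{report}) propagates $h(K\cup\tilde f(\overline{I}))\subset U$ to $h(K\cup f_\epsilon(\overline{I}))\subset U$, which is (3). The main obstacle I anticipate is the bookkeeping of parameters: the mollification inflates $k$-th derivatives by the factor $\sigma^{-(k-1)}$, so the quantifiers must be ordered correctly—$\sigma$ is fixed first (to secure both hull stability and the support condition on $g_\sigma$) and $\eta$ is only then shrunk to absorb this amplification and attain (1). Once this order is right, the rest is routine.
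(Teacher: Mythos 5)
Your strategy (apply the $C^1$ version from \cite{LW}, mollify the correction $g=\tilde f-f$, and transport the hull inclusion via Corollary~\ref{LW2}) is a natural first idea, but it is not what the cited references do, and as written it contains a genuine circularity in the choice of parameters that your closing paragraph flags but then resolves in the wrong order. You propose to ``choose $\sigma$ first, small enough that $\operatorname{supp}(g_\sigma)$ avoids $f^{-1}(K)$ and that $\|g_\sigma-g\|_{C^1}$ lies below the stability constant $c(\tilde f(\overline I),K,U)$, and only then choose $\eta$.'' But $g=\tilde f-f$ does not exist until $\eta$ is fixed and \cite[Prop.~6]{LW} is applied, so neither $g_\sigma$ nor the target constant $c(\tilde f(\overline I),K,U)$ can be inspected before $\eta$ is chosen. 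The support condition alone can be pinned down in advance (Remark~\ref{rem1} makes the vanishing set of $g$ independent of $\eta$), but the $C^1$-approximation and stability conditions cannot.

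Reversing the order does not obviously save the argument. Fixing $\eta$ first and then choosing $\sigma$, the $C^k$ bound forces the lower constraint $\sigma>(C_k\eta/\epsilon)^{1/(k-1)}$, while the requirements that $\operatorname{supp}(g_\sigma)$ avoid $f^{-1}(K)$, that $\|g_\sigma-g\|_{C^1}$ undercut $c(\tilde f(\overline I),K,U)$, and that $f_\epsilon(\overline I)$ be Hausdorff-close enough to $\tilde f(\overline I)$ for the hull inclusion to survive all force upper constraints. The lower bound shrinks with $\eta$, but the upper bounds depend on $\tilde f$ (through the modulus of continuity of $Dg$, through $c(\tilde f(\overline I),K,U)$, and through the upper semicontinuity threshold for $\widehat{K\cup\tilde f(\overline I)}$), and these are exactly the quantities whose dependence on $\eta$ is uncontrolled by the statements of Proposition~\ref{LW1}, Corollary~\ref{LW2}, or \cite[Prop.~6]{LW}. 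Nothing in those statements excludes the possibility that the admissible window for $\sigma$ is empty for every $\eta$. To close the gap you would need an additional uniformity lemma, e.g.\ a lower bound on $c(\tilde f(\overline I),K,U)$ holding uniformly over all $\tilde f$ that are $C^1$-close to $f$, or a uniform upper semicontinuity estimate for the polynomial hulls $\widehat{K\cup\tilde f(\overline I)}$; neither is available off the shelf.

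For comparison: the reference \cite[Lemma~3.1]{AW}, which the paper points to, does not smooth a $C^1$ perturbation after the fact. It reruns the perturbation scheme of \cite{LW}, but builds the perturbing maps out of smooth data (polynomials and smooth bump functions) and measures closeness directly in $C^k$. Because the perturbations are smooth from the start, the $C^k$ conclusion falls out of the same argument and the quantifier conflict above never arises. I would suggest either adopting that route or, if you want to keep the mollification approach, first proving the missing uniformity statement.
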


\begin{remark}\label{rem1}
The meaning of the word \emph{near} in (2) is independent of $\epsilon$. This follows directly from the proof in \cite{AW} and \cite{LW}
but can also be seen as follows. Choose $K'$ polynomially convex with $K'\subset U$, and with $K\subset\mathrm{int}(K')$. 
Then $V=f^{-1}(\mathrm{int}(K'))$ is an open set containing the closed set $f^{-1}(K)$. Applying the proposition 
with $K'$ instead of $K$ gives $f_\epsilon=f$ on $V$.
\end{remark}

\section{Proof of Theorem~\ref{glass}}
To prove Theorem~\ref{glass} we use two lemmas, Lemma~\ref{easywhitney} 
and Lemma~\ref{glasss} below.
Let $X$ be a $d$-dimensional totally real submanifold of an open set in $\mathbb{C}^n$ 
and let $I\subset\mathbb{R}^d$ be the open unit cube. We say that
$f\colon\overline{I}\to X$ is a local parametrization if $f$ is a diffeomorphism 
from a neighborhood of $\overline{I}$ to an open submanifold of $X$. 
We have the following elementary lemma.

\begin{lemma}\label{easywhitney}
Let $O\subset\mathbb{C}^n$ be an open set, $X\subset O$ a totally real smooth 
closed submanifold of dimension $d$, and $f\colon\overline{I}\to X$ 
a local parametrization.
There is a neighborhood $O'$ of $X$ in $O$ with the following properties.
For any $\kappa,\ell\in\mathbb{N}$ there is a constant $C>0$ such that if 
$\eta\in C^\infty_0(f(I),\mathbb{C}^m)$,
then there is a smooth extension $\widetilde{\eta}$ of $\eta$ to $O'$ and
\begin{itemize}
\item[(i)] $\displaystyle{\frac{\partial^{\alpha+\beta}\widetilde{\eta}}{\partial z^\alpha\bar{z}^\beta}(z)}=0$ 
for all $z\in X$ if $|\alpha|+|\beta|\leq\kappa$ and $|\beta|\geq 1$,
\item[(ii)] 
$\|\widetilde{\eta}\|_{C^\ell(O')}\leq C\|\eta\|_{C^{\ell+\kappa}(f(\overline{I}))}$,
\item[(iii)] $\widetilde{\eta}=0$ in a neighborhood of $X\setminus f(I)$,
\item[(iv)] If $K\subset\mathbb{C}^n$ is compact and $\eta=0$ in a neighborhood of 
$K\cap f(\overline{I})$ in $ f(\overline{I})$, then $\widetilde{\eta}=0$ in a neighborhood
only depending on $\text{supp}\,\eta$
of $K\cap X$ in $\mathbb{C}^n$.
\end{itemize}
\end{lemma}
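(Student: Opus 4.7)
The plan is to build $\widetilde\eta$ in an almost-holomorphic tubular neighborhood of $f(\overline I)$, using truncated Taylor expansions to enforce the $\bar\partial$-flatness directly. Since $X$ is totally real of dimension $d<n$, the vectors $v_j(t):=\partial f/\partial t_j$ are $\C$-linearly independent at each $t$, so one can choose smooth $\C^n$-valued functions $u_{d+1}(t),\dots,u_n(t)$ on a neighborhood of $\overline I$ such that $\{v_1,\dots,v_d,u_{d+1},\dots,u_n\}$ is a $\C$-basis of $\C^n$ pointwise. Complexifying $t$ to $\zeta=t+is\in\C^d$, define the truncated Taylor extensions
\[
F(\zeta)=\sum_{|\alpha|\le\kappa}\frac{(is)^\alpha}{\alpha!}\partial_t^\alpha f(t),\qquad U_k(\zeta)=\sum_{|\alpha|\le\kappa}\frac{(is)^\alpha}{\alpha!}\partial_t^\alpha u_k(t),
\]
and the tubular map $\Phi(\zeta,w):=F(\zeta)+\sum_{k=d+1}^n w_k U_k(\zeta)$ for $w\in\C^{n-d}$ small. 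A direct calculation gives $\partial_{\bar\zeta_j}F=\tfrac12\sum_{|\alpha|=\kappa}(is)^\alpha/\alpha!\cdot\partial_t^{\alpha+e_j}f$, homogeneous of $s$-degree $\kappa$; the same holds for $U_k$, so $\bar\partial\Phi$ vanishes to order $\kappa$ on the slice $\overline I\times\{0\}\times\{0\}$. In particular $d\Phi$ is $\C$-linear on this slice, making $\Phi$ a diffeomorphism from a neighborhood of it onto a tubular neighborhood $O'$ of $f(\overline I)$.

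Set $g(t):=\eta(f(t))\in C_0^\infty(I,\C^m)$ and put
\[
H(\zeta,w):=\sum_{|\alpha|\le\kappa}\frac{(is)^\alpha}{\alpha!}\partial_t^\alpha g(t),
\]
which does not depend on $w$. Pick a cutoff $\chi$ that equals $1$ in a neighborhood of $\text{supp}\,g\times\{0\}\times\{0\}$ and is compactly supported in $\Phi$'s domain, and define $\widetilde\eta(z):=\chi(\Phi^{-1}(z))H(\Phi^{-1}(z))$ on the image of $\Phi$, extended by zero to $\C^n$. The crux is property (i). First, the same computation as for $F$ yields $\partial_\zeta^\alpha\partial_{\bar\zeta}^\beta H|_{s=0}=0$ whenever $|\alpha|+|\beta|\le\kappa$ and $|\beta|\ge 1$, and $H$ is constant in $w$, so every $\bar\partial$-derivative of $H$ of total order $\le\kappa$ vanishes on $\Phi^{-1}(X)$. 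Second, from $\bar\partial\Phi=O(|s|^\kappa)$ and the identity $d\Phi^{-1}=(d\Phi)^{-1}$, the map $\Phi^{-1}$ is also almost-holomorphic of order $\kappa$ on $X$: $\partial_{\bar z}^r\zeta_j|_X=\partial_{\bar z}^r w_k|_X=0$ for $1\le r\le\kappa$. Expanding $\partial_{z^\alpha\bar z^\beta}\widetilde\eta$ via the chain rule at a point of $X$, each resulting term contains either a $\bar\partial$-derivative of $H$ of order $\le\kappa$ (vanishing by the first fact) or a $\partial_{\bar z}^r$-derivative of a component function of $\Phi^{-1}$ with $1\le r\le\kappa$ (vanishing by the second), which gives (i). Property (ii) is immediate from the explicit formula, since $\ell$ derivatives of $\widetilde\eta$ involve at most $\ell+\kappa$ derivatives of $g$. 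Property (iii) holds because $\widetilde\eta$ is supported in a tubular neighborhood of $f(\text{supp}\,g)\subset f(I)$, at positive distance from $X\setminus f(I)$. Property (iv) follows because vanishing of $\eta$ on an open $V\subset f(\overline I)$ containing $K\cap f(\overline I)$ forces $g$ and all its derivatives to vanish on $f^{-1}(V)\subset\overline I$, so $H\equiv 0$ on a $(\zeta,w)$-neighborhood that pushes forward through $\Phi$ to a $\C^n$-neighborhood of $V$ depending only on $\text{supp}\,\eta$; together with (iii) this covers $K\cap X$.

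The main technical obstacle is the chain-rule bookkeeping for (i): verifying that the $\kappa$-th order vanishing of both $\bar\partial H$ and $\bar\partial\Phi$ combine correctly under composition to produce exactly the required $\bar\partial$-flatness of $\widetilde\eta$ to order $\kappa$ on $X$. This is a careful but routine multi-index computation once the truncation orders have been matched to $\kappa$ throughout the construction.
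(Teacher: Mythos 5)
Your proof is correct and rests on the same core device as the paper's: truncated Taylor expansion in the imaginary directions to produce an almost-holomorphic (order-$\kappa$ $\bar\partial$-flat) extension, combined with the fact that the differential of the complexified parametrization is $\mathbb{C}$-linear and invertible on the totally real slice, so the extension can be transported to a neighborhood in $\mathbb{C}^n$ by composition with the inverse. The organizational difference is in how the case $d<n$ is handled. The paper first proves the $d=n$ case, where $\hat f$ is already a local diffeomorphism of $\mathbb{C}^n$, and then reduces $d<n$ to it by building $\check\eta$ on the $2d$-dimensional image $Y$ of the complexified chart and pulling back through a tubular-neighborhood retraction $\tau$ of $Y$; for (i) to survive this pullback one needs $\tau$ itself to be $\bar\partial$-flat along $f(I)$, a point the paper leaves implicit. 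You instead build a full $\mathbb{C}^n$-valued chart $\Phi(\zeta,w)=F(\zeta)+\sum_k w_k U_k(\zeta)$ in one step, by adjoining the complexified transverse frame $u_{d+1},\dots,u_n$ to the complexified tangent directions, and set $\widetilde\eta=(\chi H)\circ\Phi^{-1}$. This makes the tubular retraction (namely $\Phi(\zeta,w)\mapsto\Phi(\zeta,0)$) and its $\bar\partial$-flatness explicit, which is a small gain in self-containedness; it also specializes to the paper's construction when $d=n$. Two points where you gloss over routine verifications, exactly as the paper does: that the inverse of a map which is $\bar\partial$-flat to order $\kappa$ along a totally real set and has $\mathbb{C}$-linear invertible differential there is again $\bar\partial$-flat to order $\kappa$; and that the cutoff $\chi$, chosen $\equiv 1$ near $\mathrm{supp}\,g\times\{0\}\times\{0\}$, does not spoil (i) since $H$ vanishes to infinite order in $t$ off $\mathrm{supp}\,g$. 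Both are standard Faà di Bruno bookkeeping, consistent with the level of detail in the paper.
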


\begin{proof}
Assume first that $d=n$. Define $\hat{f}\colon I+i\mathbb{R}^n\to\mathbb{C}^n$ by
\begin{equation}\label{friday}
\hat{f}(x+iy)=\sum_{|\alpha|\leq \kappa}
\frac{\partial^\alpha f}{\partial x^\alpha}(x)\cdot \frac{(iy)^\alpha}{\alpha !}.
\end{equation}
Clearly, $\hat{f}$ is an extension of $f$ to $I+i\mathbb{R}^n$ and one checks that
if $\zeta=x+iy$ and $|\beta|\geq 1$, then 
\begin{equation}\label{monday}
\frac{\partial^{\alpha+\beta} \hat f}{\partial \zeta^\alpha\partial\bar{\zeta}^\beta}(x+iy)
=\mathcal{O}(|y|^{\kappa-|\alpha|-|\beta|+1}).
\end{equation}
In particular, the differential of $\hat{f}$ is $\mathbb{C}$-linear at all $x\in I$. 
Since $Df_x$ has real rank $n$ and $X$ is totally real it follows that
$D\hat{f}_x$ has complex rank $n$. Thus $\hat{f}$ is a diffeomorphism from a
neighborhood $V$ of $I$ in $I+i\mathbb{R}^n$ to a neighborhood $\hat f(V)$ of $f(I)$ in 
$\mathbb{C}^n$. We take $V$ to be of the form $I+i(-\delta,\delta)^n$ for a sufficiently 
small $\delta>0$.

Let $\eta\in C^\infty_0(f(I),\mathbb{C}^m)$.
Define $\hat{\eta}\colon I+i\mathbb{R}^n\to\mathbb{C}^n$
as in \eqref{friday} with $f$ replaced by $\eta\circ f$. Then $\hat \eta$ is an extension of 
$\eta\circ f$ to $I+i\mathbb{R}^n$
and satisfies \eqref{monday} with $\hat f$ replaced by $\hat\eta$. 
Notice that if $\eta\circ f=0$ in an open set $\omega\subset I$, then 
$\hat \eta=0$ in $\omega+i\mathbb{R}^n$. Since $\eta$ has compact support in $f(I)$,
and if $K\subset\mathbb{C}^n$ is compact and $\eta=0$ in a neighborhood of 
$K\cap f(\overline{I})$ in $ f(\overline{I})$, it follows that $\hat \eta=0$ in a neighborhood
of $(\partial I\cup f^{-1}(K))+i\mathbb{R}^n$.
Now,
$$
\widetilde{\eta}:=\hat \eta\circ \hat f^{-1}
$$
is an extension of $\eta$ to $\hat f(V)$, 
it satisfies (i)
with $X$ replaced by $f(I)$, and 
$$
\|\widetilde \eta\|_{C^\ell(\hat f(V))}\leq C \|\eta\|_{C^{\ell+\kappa}(f(\overline{I}))}
$$
for some constant $C>0$.
If $\eta=0$ in a neighborhood of 
$K\cap f(\overline{I})$ in $ f(\overline{I})$, then possibly after shrinking $V$ we can assume
that $\widetilde{\eta}=0$ in a neighborhood of $K\cap \hat f(V)$ in $\hat f(V)$.

Let $V'$ be a neighborhood of 
$X\setminus f(I)$ such that
$V'\cap \hat f(V)\subset\{\widetilde{\eta}=0\}$. We extend $\widetilde{\eta}$
to $O'=V'\cup \hat f(V)$ by setting $\widetilde \eta=0$ in $V'$. It follows that 
$\widetilde \eta$ has the required properties.

\smallskip

Now suppose that $d<n$. As in the first part of the proof one obtains an 
extension $\check{\eta}$ of
$\eta$ to a $2d$-dimensional submanifold $Y\subset O$ containing $f(I)$ such that
for all $p\in f(I)$,
$T_pY\simeq\mathbb{C}^d$ and $D\check{\eta}_p$ is $\mathbb{C}$-linear
to order $\kappa$.
Let $\tau\colon \widetilde{V}\to Y$ be a tubular neighborhood of $Y$. Then 
$\widetilde{\eta}=\tau^* \check{\eta}$  is an extension of $\eta$ 
to $\widetilde{V}$ such that (i) holds with $X$ replaced by $f(I)$.
One now concludes the proof in the same way as in the case $d=n$.
\end{proof}

\begin{lemma}\label{glasss}
Let $K\subset \mathbb{C}^n$ be a polynomially convex compact set, and let $X$
be a  bounded closed totally real $C^\infty$-smooth submanifold 
of $\mathbb{C}^n\setminus K$ with $\mathrm{dim}(X)=d<n$.
Let $U$ be a neighborhood of $K$. 
Then for any $\epsilon>0$ and $\kappa, \ell\in\mathbb{N}$ 
there exists a $C^\infty$-smooth 
diffeomorphism $F_\epsilon\colon\mathbb{C}^n\rightarrow\mathbb{C}^n$
such that the following hold: 
\begin{itemize}
\item[(i)] $\|F_\epsilon-\mathrm{id}\|_{C^\ell(\mathbb C^n)}<\epsilon$,
\item[(ii)] $\displaystyle{\frac{\partial^{\alpha+\beta}F_\epsilon}{\partial z^\alpha
\partial\bar{z}^\beta}(z)=0}$
for all $z\in X$ if $|\alpha|+|\beta|\leq\kappa$ and $|\beta|\geq 1$, 
\item[(iii)] $F_\epsilon = \mathrm{id}$ in a neighborhood of $K$,
\item[(iv)] $\widehat{F_\epsilon(K\cup X)}\subset U\cup F_\epsilon(X)$.
\end{itemize}
\end{lemma}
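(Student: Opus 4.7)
Plan. I would prove Lemma~\ref{glasss} by induction on a finite chart cover of $X$, using Proposition~\ref{LW3} to perturb each chart while maintaining hull control and Lemma~\ref{easywhitney} to lift each chart-level perturbation to a $\bar\partial$-flat ambient diffeomorphism of $\mathbb{C}^n$; Corollary~\ref{LW2} then transfers the hull control between steps.

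Setup. Since $F_\epsilon$ must be the identity in a neighborhood of $K$, only the portion of $X$ at positive distance from $K$ needs to be perturbed, and this portion is a compact totally real submanifold. Cover it by finitely many local parametrizations $f_j:\overline I\to X$, $j=1,\dots,N$, with nested open subcubes $I_j^0\Subset I_j^1\Subset I$ so that $\bigcup_j f_j(I_j^0)$ covers $X$ away from a small fixed neighborhood of $K$. Write $A_j=\bigcup_{i\le j}f_i(\overline{I_i^0})$ and $A_j^+=\bigcup_{i\le j}f_i(\overline{I_i^1})$, and fix strictly nested open neighborhoods $K\subset U_0\Subset\cdots\Subset U_N=U$. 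I construct diffeomorphisms $F_0=\mathrm{id},F_1,\dots,F_N$ of $\mathbb{C}^n$ inductively so that each $F_j$ satisfies (i)--(iii) of the lemma together with the hull estimate
\begin{equation*}
\widehat{K\cup F_j(A_j)}\subset U_j\cup F_j(A_j^+).
\end{equation*}
At $j=N$, $A_N=A_N^+=X$ and this gives (iv).

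Inductive step. Given $F_{j-1}$, set $g_j:=F_{j-1}\circ f_j$; since $DF_{j-1}$ is $\mathbb{C}$-linear on $X$ by (ii), $g_j(\overline I)$ is a totally real embedding. The set $\widetilde K_{j-1}:=\widehat{K\cup F_{j-1}(A_{j-1})}$ is polynomially convex and, by the inductive hypothesis, contained in $U_{j-1}\cup F_{j-1}(A_{j-1}^+)$. Apply Proposition~\ref{LW3} to $(\widetilde K_{j-1},g_j)$ with a target open set of the form $U_j\cup V_{j-1}$, where $V_{j-1}$ is a thin tubular neighborhood of $F_{j-1}(A_{j-1}^+)$ chosen small enough so that $\widetilde K_{j-1}\subset U_j\cup V_{j-1}$; this yields $\tilde g_j$ close to $g_j$ in $C^{\ell+\kappa}$, equal to $g_j$ near $g_j^{-1}(\widetilde K_{j-1})$, with $h(\widetilde K_{j-1}\cup \tilde g_j(\overline I))\subset U_j\cup V_{j-1}$. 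Using a cutoff $\rho$ equal to $1$ on $\overline{I_j^0}$ and supported in $I_j^1$, replace $\tilde g_j$ by $\check g_j:=g_j+\rho(\tilde g_j-g_j)$; Corollary~\ref{LW2} preserves the hull estimate for $\epsilon'$ sufficiently small. Now apply Lemma~\ref{easywhitney} to $F_{j-1}(X)$ and the chart $g_j$ to extend the perturbation $\eta_j:=\check g_j\circ g_j^{-1}-\mathrm{id}$, which is compactly supported in $g_j(\overline{I_j^1})\subset F_{j-1}(X)$, to a $C^\infty$ map $\tilde\eta_j$ on a neighborhood of $F_{j-1}(X)$ that is $\bar\partial$-flat of order $\kappa$ on $F_{j-1}(X)$, supported near $g_j(\overline{I_j^1})$, vanishes near $K$, and satisfies $\|\tilde\eta_j\|_{C^\ell}\le C\|\eta_j\|_{C^{\ell+\kappa}}$. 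Multiply $\tilde\eta_j$ by a smooth cutoff identically $1$ on a neighborhood of $F_{j-1}(X)$ (so as not to spoil $\bar\partial$-flatness), extend by zero to $\mathbb{C}^n$, and set $G_j:=\mathrm{id}+\tilde\eta_j$, $F_j:=G_j\circ F_{j-1}$. For $\epsilon'$ small, $G_j$ is a diffeomorphism $C^\ell$-close to the identity.

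Verification and main obstacle. Properties (i)--(iii) for $F_j$ follow from the chain rule and the explicit construction of $G_j$; in particular $F_j(f_j(\overline{I_j^0}))=G_j(g_j(\overline{I_j^0}))=\tilde g_j(\overline{I_j^0})$, since $G_j\circ g_j=\check g_j$ and $\check g_j=\tilde g_j$ on $\overline{I_j^0}$. For the new hull estimate, the bound $\widehat{\widetilde K_{j-1}\cup \tilde g_j(\overline I)}\subset \widetilde K_{j-1}\cup \tilde g_j(\overline I)\cup U_j\cup V_{j-1}$ from Proposition~\ref{LW3}, combined with Corollary~\ref{LW2} applied to the small ambient perturbation induced by $G_j$, yields the required $\widehat{K\cup F_j(A_j)}\subset U_j\cup F_j(A_j^+)$, once $V_{j-1}$ is taken thin enough relative to the margin $U_{j-1}\Subset U_j$ to absorb the increment. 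The principal difficulty lies precisely in this bookkeeping: the ambient extension $G_j$ unavoidably moves the previously processed pieces slightly, so at each step one must take $\epsilon'$ and $V_{j-1}$ small enough that Corollary~\ref{LW2} can transfer the prior hull control, while the strict nesting $U_{j-1}\Subset U_j$ absorbs the error introduced by the construction.
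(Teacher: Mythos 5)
There is a genuine gap in the verification of the inductive hull estimate, and it is precisely the point where the paper invokes Proposition~\ref{LW1}, which your argument never uses.

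You want to conclude $\widehat{K\cup F_j(A_j)}\subset U_j\cup F_j(A_j^+)$ in the inductive step. From Proposition~\ref{LW3} applied with target $U_j\cup V_{j-1}$ you can only obtain $h(\widetilde K_{j-1}\cup \tilde g_j(\overline I))\subset U_j\cup V_{j-1}$, and Corollary~\ref{LW2} merely preserves an inclusion of this form under a small $C^1$-perturbation. Neither statement gives you a way to exclude the hull from $V_{j-1}\setminus F_j(A_j^+)$. You try to finesse this by ``taking $V_{j-1}$ thin enough,'' but thinness of a tubular neighborhood of a totally real set does not by itself prevent the polynomial hull from bulging inside it; there is no a priori monotonicity of hulls with respect to shrinking the ambient neighborhood that would yield the desired inclusion. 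The paper resolves exactly this by choosing, via Proposition~\ref{LW1}, a special neighborhood $\Omega$ of the (unperturbed) relevant piece $F_{\epsilon,i}(X_i\cup g_{i+1}(\overline I))$ with the property that $\widehat{K'\cup S}\subset U_i\cup\Omega$ forces $h(K'\cup S)\subset U_{i+1}$, and by part~(2) of that proposition the same $\Omega$ works for small $C^1$-perturbations with any closed subset $S'$. It is only this implication -- ``hull in a thickened neighborhood $\Rightarrow$ excess lies in the next $U_{i+1}$'' -- that converts control in $U_i\cup\Omega$ into the desired control in $U_{i+1}\cup F_j\circ F_{\epsilon,i}(X_{i+1})$. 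Without Proposition~\ref{LW1} the induction does not close.

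Apart from this, your decomposition (finite chart cover, nested subcubes, extension via Lemma~\ref{easywhitney}, cutoff to build an ambient diffeomorphism, composition) closely mirrors the paper's proof, and the secondary differences (working with $K$ rather than fixing a slightly larger polynomially convex $K'$ once and for all) are organizational rather than essential. To repair the argument, replace the ad hoc thin tubular neighborhood $V_{j-1}$ by the $\Omega$ furnished by Proposition~\ref{LW1} for the data $K'\subset U_{j-1}\Subset U_j$ applied to the compact parametrized totally real set $F_{j-1}(A_{j-1}^+\cup f_j(\overline I))$, apply Proposition~\ref{LW3} with target $U_{j-1}\cup\Omega$, and then use Proposition~\ref{LW1}(2), not Corollary~\ref{LW2}, to conclude.
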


\begin{proof}
Fix a polynomially convex compact set $K'$ such that 
\begin{equation}\label{home}
K\subset\mathrm{int}(K')\subset K'\subset  U.
\end{equation}
Let $I\subset\mathbb{R}^d$ be the open unit cube and $I_0\Subset I$ a 
slightly smaller cube. Fix local 
parametrizations 
$g_j\colon \overline{I}\rightarrow X$, $j=1,\ldots,m$, such that 
$\{g_j(I_0)\}_{j=1}^m$ covers $\overline{X\setminus K'}$ and
$g_j(\overline{I})\cap K =\emptyset$ for $j=1,\ldots,m$.
We let $X^j=g_j(\overline{I}_0)$, and for $1\leq i\leq m$ we let $X_i=\cup_{j\leq i} X^j$. 
Moreover, take open sets $U_i$ such that $K'\subset U_1\Subset\cdots\Subset U_m=U$. 
By induction on $i$ 
we will construct diffeomorphisms $F_{\epsilon,i}$ as in the lemma, but where (iii) is
replaced by 
\begin{equation}\label{three}
F_{\epsilon,i}=\text{id}\,\,\text{on}\,\,K'
\end{equation}
and (iv) is replaced by 
\begin{equation}\label{four}
\widehat{F_{\epsilon,i}(K'\cup X_i)}\subset  U_i\cup F_{\epsilon,i}(X_i).
\end{equation}
Then $F_{\epsilon,m}$ has the properties in the lemma by \eqref{home} and since
$K\cup X\subset K'\cup X_m$.

To start the induction, we  take $F_{\epsilon,0}=\text{id}$, which has the required properties for $i=0$ if we let $X_0=\emptyset$ and $K'\subset U_0\Subset U_1$.
Assume now that we have $F_{\epsilon,i}$ for some $0\leq i\leq m-1$.
Let $\Omega$ be a neighborhood of $F_{\epsilon,i}(X_i\cup g_{i+1}(\overline{I}))$ as in
Proposition~\ref{LW1} with data $K'\subset U_i\Subset U_{i+1}$.
For notational convenience, let
\begin{equation}\label{clouds}
K_i=\widehat{F_{\epsilon,i}(K'\cup X_i)},
\end{equation}
let $f=F_{\epsilon,i}\circ g_{i+1}$, and notice that
$K_i\cup f(\overline{I})\subset U_i\cup\Omega$ in view of \eqref{four}.
Replacing $U$ in Proposition~\ref{LW3} by
$U_i\cup\Omega$ it follows that there is a sequence 
$f_{\epsilon_j}\in C^\infty(\overline{I},\mathbb{C}^n)$ such that 
$f_{\epsilon_j}\to f$, in $C^{\ell+\kappa}(\overline{I})$ as $ j\to\infty$, 
$f_{\epsilon_j}=f$ in a neighborhood of $f^{-1}(K_i)$, and
\begin{equation}\label{cloudss}
\widehat{K_i\cup f_{\epsilon_j}(\overline{I})}\subset U_i\cup\Omega.
\end{equation}
The neighborhood of $f^{-1}(K_i)$ where $f_{\epsilon_j}=f$ is independent of $j$, 
see Remark~\ref{rem1}.
Take $\chi_1\in C^\infty_0(I)$ such that $\chi_1=1$ in a neighborhood of $\overline{I}_0$
and let $k_j=f+\chi_1\cdot (f_{\epsilon_j}-f)$. Then
$k_j\to f$ in $C^{\ell+\kappa}(\overline{I})$
as $j\to\infty$, $k_j=f$ in a neighborhood of $f^{-1}(K_i)\cup\partial I$ 
independent of $j$, and
\begin{equation}\label{cloudsss}
k_j=f_{\epsilon_j}\,\,\text{in a neighborhood of}\,\, \overline{I}_0.
\end{equation}
Letting $\eta_j=k_j\circ f^{-1}-\text{id}$ we then have 
$\eta_j\in C^\infty_0(f(I),\mathbb{C}^n)$, $\eta_j\to 0$ in 
$C^{\ell+\kappa}(f(\overline{I}),\mathbb{C}^n)$ as $j\to\infty$, and 
$\eta_j=0$
in a neighborhood of $K_i\cap f(\overline{I})$ in $f(\overline{I})$ 
independent of $j$.

By 
Lemma~\ref{easywhitney} there are $C^\infty$-smooth extensions $\tilde{\eta}_j$ of
$\eta_j$ to a neighborhood $O'$ of $F_{\epsilon,i}(X)$ in $\mathbb{C}^n\setminus K$
such that 
$\tilde{\eta}_j$ satisfy (i) of Lemma~\ref{easywhitney} with $X$ replaced by 
$F_{\epsilon,i}(X)$,
$\tilde{\eta}_j\to 0$ in $C^\ell(O')$ as $j\to\infty$,
and $\tilde{\eta}_j=0$ in a neighborhood of 
$(K_i\cap F_{\epsilon,i}(X))\cup(F_{\epsilon,i}(X)\setminus f(I))$
independent of $j$. Take $\chi_2\in C^\infty_0(O')$ such that
$\chi_2=1$ in a neighborhood of $f(\overline{I})$ and
$
\text{supp}\, \chi_2\cap K_i\subset \{\tilde{\eta}_j=0\}.
$
Now let $F_j=\text{id}+\chi_2\tilde{\eta}_j$. Then
\begin{equation}\label{goal}
F_j\to\text{id}\,\,\text{in}\,\, C^\ell(\mathbb{C}^n,\mathbb{C}^n),
\end{equation}
\begin{equation}\label{floorball2}
F_j=\text{id}\,\,\text{on}\,\, K_i\cup (F_{\epsilon,i}(X)\setminus f(I)),
\end{equation}
\begin{equation}\label{gooal}
\frac{\partial^{\alpha+\beta}F_j}{\partial z^\alpha\partial\bar{z}^\beta}(z)=0,\,\,
\forall z\in F_{\epsilon,i}(X)\,\, \text{if}\,\, |\alpha|+|\beta|\leq \kappa, \, |\beta|\geq 1.
\end{equation}
Moreover, 
\begin{equation}\label{floorball3}
[F_j\circ F_{\epsilon,i}(K'\cup X_{i+1})]^{\widehat{}} 
\subset U_i\cup\Omega.
\end{equation}
In fact, in view of \eqref{clouds}, \eqref{floorball2}, \eqref{cloudsss}, and \eqref{cloudss}
we have
\begin{eqnarray*}
[F_j\circ F_{\epsilon,i}(K'\cup X_{i+1})]^{\widehat{}} 
& \subset &
[F_j(K_i\cup F_{\epsilon,i}(X^{i+1}))]^{\widehat{}}
=
[K_i\cup F_j\circ F_{\epsilon,i}(X^{i+1})]^{\widehat{}}\\
&=&
[K_i\cup(\text{id}+\eta_j)\circ f(\overline{I}_0)]^{\widehat{}}
=
[K_i\cup k_j(\overline{I}_0)]^{\widehat{}}\\
&=&
[K_i\cup f_{\epsilon_j}(\overline{I}_0)]^{\widehat{}}
\subset 
U_i\cup\Omega,
\end{eqnarray*}
which shows \eqref{floorball3}.
By \eqref{three} 
and \eqref{floorball2}, both
$F_{\epsilon,i}$ and $F_j$ are the identity on $K'$. Thus
\begin{equation}\label{floorball4}
F_j\circ F_{\epsilon,i}(K'\cup X_{i+1})=K'\cup F_j\circ F_{\epsilon,i}(X_{i+1}).
\end{equation}
Since $F_j\circ F_{\epsilon,i}(X_{i}\cup g_{i+1}(\overline{I}))$ is a small perturbation of 
$F_{\epsilon,i}(X_{i}\cup g_{i+1}(\overline{I}))$ and 
$F_j\circ F_{\epsilon,i}(X_{i+1})$ is a closed set contained in $F_j\circ F_{\epsilon,i}(X_{i}\cup g_{i+1}(\overline{I}))$ it follows by Proposition~\ref{LW1}~(2), \eqref{floorball3}, 
\eqref{floorball4},
and the choice of $\Omega$ that
$$
[F_j\circ F_{\epsilon,i}(K'\cup X_{i+1})]^{\widehat{}} 
\subset U_{i+1}\cup F_j\circ F_{\epsilon,i}(X_{i+1})
$$
if $j$ is large enough. Letting $F_{\epsilon,i+1}=F_j\circ F_{\epsilon,i}$ for $j$ sufficiently
large thus \eqref{four} holds for $i+1$. By \eqref{goal} and \eqref{gooal},
$F_{\epsilon,i+1}$ satisfies (i) and (ii) of the lemma. Finally,  
\eqref{three} holds for
$i+1$ in view of \eqref{floorball2}.  
This completes the induction step.
\end{proof}

\begin{proof}[Proof of Theorem~\ref{glass}]
The proof of Theorem~\ref{glass} is concluded using Lemma~\ref{glasss} in 
a similar way
as \cite[Theorem~1.4]{AW} is concluded using \cite[Lemma~3.1]{AW}. 

Let $U_1\supset U_2\supset\cdots$ be neighborhoods of $K$ such that
$\cap_1^\infty U_j=K$. Let $K_j\subset U_j$ be compact polynomially convex sets
such that
$K\subset\text{int}(K_j)$. Let $\varphi\colon\mathbb{N}\to\mathbb{N}$ be increasing such that $U_{\varphi(j)}\Subset \text{int}(K_j)$, let $d_j>0$ be the distance between 
$U_{\varphi(j)}$ and $K_j^c$, and let $d_0=\delta$.

Let $F_0=\text{id}$. Inductively, using Lemma~\ref{glasss}
with data $K_j$, $X_{j-1}:=F_{j-1}\circ\cdots\circ F_0(X)$, and $U_{j}$,
there are diffeomorphisms $F_{j}\colon\mathbb{C}^n\to\mathbb{C}^n$, $j=1,2,\cdots$, 
such that
$F_{j}=\text{id}$ in a neighborhood of $K_j$, 
$F_j$ satisfy (ii) of Lemma~\ref{glasss} with $X$ replaced by $X_{j-1}$, 
\begin{equation}\label{shower}
\big[F_{j}(K_j\cup X_{j-1})\big]^{\widehat{}}\subset U_{j}\cup F_{j}(X_{j-1}),
\end{equation}
$\|F_{1}-\text{id}\|_{C^{k}(\mathbb{C}^n)}<d_0/3$, and
$$
\|F_{j}-\text{id}\|_{C^{k+j-1}(\mathbb{C}^n)}<
\min
\left\{\frac{d_{0}}{3^{j}},\frac{c_1}{3^j},\frac{d_1}{3^{j-1}},\frac{c_2}{3^{j-1}},
\ldots,\frac{d_{j-1}}{3},\frac{c_j}{3}\right\},
$$
where $c_j=c(X_{j-1}\setminus U_{\varphi(j-1)},K_j,U_j)$; see \eqref{report}.
Let $F_j^\ell=F_\ell\circ\cdots\circ F_j$. Then $F_j^\ell$ converge to smooth mappings
$F^\infty_j\colon\mathbb{C}^n\to\mathbb{C}^n$ as $\ell\to\infty$ 
such that $F^\infty_j=\text{id}$
on $K$, $F^\infty_j$ satisfy (iii) of the theorem with $X$ replaced by $X_{j-1}$,
and
$$
\|F^\infty_j-\text{id}\|_{C^k(\mathbb{C}^n)}\leq \min
\left\{ \frac{d_0}{2\cdot 3^{j-1}}, \frac{c_1}{2\cdot 3^{j-1}}, \frac{d_1}{2\cdot 3^{j-2}}, 
\frac{c_2}{2\cdot 3^{j-2}},
\ldots,\frac{d_{j-1}}{2},\frac{c_j}{2} \right\}.
$$
Thus $F^\infty_1$ is a diffeomorphism satisfying (i), (ii), and (iii) of the theorem.
To see that it also satisfies (iv), notice in view of \eqref{shower} and since 
$U_{\varphi(j)}\subset K_j$ that
$$
\big[K_j\cup(X_j\setminus U_{\varphi(j)})\big]^{\widehat{}}\subset
U_j\cup(X_j\setminus U_{\varphi(j)}).
$$
Since $\|F^\infty_{j+1}-\text{id}\|_{C^k(\mathbb{C}^n)}\leq c_{j+1}/2$ 
it follows from
Corollary~\ref{LW2} that
$$
\big[K_j\cup F^\infty_{j+1}(X_j\setminus U_{\varphi(j)})\big]^{\widehat{}}\subset
U_j\cup F^\infty_{j+1}(X_j\setminus U_{\varphi(j)}).
$$
Since moreover $\|F^\infty_{j+1}-\text{id}\|_{C^k(\mathbb{C}^n)}\leq d_{j}/2$
we have
$$
K_j\cup F^\infty_{j+1}(X_j\setminus U_{\varphi(j)})=
K_j\cup F^\infty_{j+1}(X_j),\quad
U_j\cup F^\infty_{j+1}(X_j\setminus U_{\varphi(j)})=
U_j\cup F^\infty_{j+1}(X_j).
$$
Hence,
$$
\big[K_j\cup F^\infty_{j+1}(X_j)\big]^{\widehat{}}\subset
U_j\cup F^\infty_{j+1}(X_j)
$$
and so
$$
\big[K\cup F^\infty_1(X)\big]^{\widehat{}}\subset
U_j\cup F^\infty_{1}(X)
$$
for all $j$. Thus (iv) holds.
\end{proof}

\end{document}